\newtheorem{theorem}{Theorem}
\newtheorem{lemma}[theorem]{Lemma}
\newtheorem{proposition}[theorem]{Proposition}
\newtheorem{conjecture}[theorem]{Conjecture}
\newtheorem*{Euler}{Euler's Formula}
\theoremstyle{definition}
\theoremstyle{remark}
\def\e{{\varepsilon}}
\begin{document}

\title{An improved upper bound for the bondage number of graphs 
on surfaces}
\author{Jia Huang}
\address{School of Mathematics, University of Minnesota, 
Minneapolis, MN 55455, USA}
\email{huang338@umn.edu}
\keywords{Bondage number, graph embedding, genus, 
Euler characteristic, Euler's formula, girth}
\maketitle

\begin{abstract}
The bondage number $b(G)$ of a graph $G$ is the smallest number 
of edges whose removal from $G$ results in a graph with larger 
domination number. Recently Gagarin and Zverovich showed that, 
for a graph $G$ with maximum degree $\Delta(G)$ and embeddable 
on an orientable surface of genus $h$ and a non-orientable 
surface of genus $k$, $b(G)\leq\min\{\Delta(G)+h+2,\Delta+k+1\}$.
They also gave examples showing that adjustments of their proofs 
implicitly provide better results for larger values of $h$ and $k$.
In this paper we establish an improved explicit upper bound for $b(G)$, 
using the Euler characteristic $\chi$ instead of the genera $h$ and $k$, 
with the relations $\chi=2-2h$ and $\chi=2-k$. We show that 
$b(G)\leq\Delta(G)+\lfloor r\rfloor$ for the case $\chi\leq0$ 
(i.e. $h\geq1$ or $k\geq2$), where $r$ is the largest real root 
of the cubic equation $z^3+2z^2+(6\chi-7)z+18\chi-24=0$. Our proof 
is based on the technique developed by Carlson-Develin and 
Gagarin-Zverovich, and includes some elementary calculus as a new 
ingredient. We also find an asymptotically equivalent result 
$b(G)\leq\Delta(G)+\lceil\sqrt{12-6\chi\,}-1/2\rceil$ for 
$\chi\leq0$, and a further improvement for graphs with large girth.
\end{abstract}

\section{Introduction}

All graphs in this paper are finite, undirected, and without
loops or multiple edges. Let $G$ be a graph with vertex set 
$V(G)$ and edge set $E(G)$. Given a vertex $v$ in $G$, let 
$N(v)$ be the set of all neighbors of $v$ and let $d(v)=|N(v)|$ 
be the degree of $v$. The maximum and minimum vertex degrees 
of $G$ are denoted by $\Delta(G)$ and $\delta(G)$.

A \emph{dominating set} for a graph $G$ is a subset $D\subseteq V$ of 
vertices such that every vertex not in $D$ is adjacent to
at least one vertex in $D$. The minimum cardinality of a dominating 
set is called the \emph{domination number} of $G$.
The concept of domination in graphs has many applications 
in a wide range of areas within the natural and social sciences.

The \emph{bondage number} $b(G)$ of a graph $G$ is defined as the 
smallest number of edges whose removal from $G$ results in a graph 
with larger domination number. The bondage number of $G$ was introduced 
in \cite{BHNS,FJKR}, measuring to some extent the reliability of 
the domination number of $G$ with respect to edge removal from $G$
(which may corresponds to link failure in communication networks).

In general it is NP-hard to determine the bondage number $b(G)$ 
(see Hu and Xu~\cite{HuXu}), and thus useful to find bounds for it. 

\begin{lemma}[Hartnell and Rall~\cite{HR}]\label{lem:HR}
For any edge $uv$ in a graph $G$, we have
\[
b(G) \leq d(u) + d(v)-1-|N(u) \cap N(v)|.
\] 
In particular, $b(G)\leq \Delta(G) + \delta(G)-1$.
\end{lemma}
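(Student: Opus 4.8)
The plan is to exhibit an explicit set $B$ of $d(u)+d(v)-1-|N(u)\cap N(v)|$ edges whose removal strictly increases the domination number; since $b(G)$ is by definition the minimum size of such a set, this proves the bound. Write $\gamma(G)$ for the domination number and set $c=|N(u)\cap N(v)|$. The starting observation is that deleting edges can never decrease the domination number: any set that dominates $G-B$ also dominates $G$, so $\gamma(G-B)\ge\gamma(G)$ for every $B$. Hence it suffices to choose $B$ so that $\gamma(G-B)\neq\gamma(G)$.

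First I would define $B$ to consist of all $d(v)$ edges incident with $v$, together with the $d(u)-1-c$ edges joining $u$ to its \emph{private} neighbors, the set $N(u)\setminus(N(v)\cup\{v\})$. These two families are disjoint, so $|B|=d(v)+d(u)-1-c$, exactly the claimed quantity. In $G'=G-B$ the vertex $v$ becomes isolated, while the only neighbors of $u$ that survive are the common neighbors lying in $N(u)\cap N(v)$.

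Next, arguing by contradiction, I would assume $\gamma(G')=\gamma(G)=:\gamma$ and pick a dominating set $D$ of $G'$ with $|D|=\gamma$. Because $v$ is isolated in $G'$, necessarily $v\in D$, and $v$ dominates nothing but itself in $G'$; consequently $D\setminus\{v\}$ already dominates every vertex other than $v$, both in $G'$ and hence in $G$. The goal is then to show that $D\setminus\{v\}$ also dominates $v$ in $G$, which would make it a dominating set of $G$ of size $\gamma-1$, contradicting the minimality of $\gamma=\gamma(G)$. The key step is a short case analysis that supplies a member of $D\setminus\{v\}$ adjacent to $v$ in $G$: if $u\in D$ we are done, since $u$ and $v$ are adjacent in $G$; otherwise $u$ must be dominated in $G'$ through one of its surviving edges, forcing some common neighbor $w\in N(u)\cap N(v)$ to lie in $D$, and such a $w$ is adjacent to $v$ in $G$. (When $c=0$ the second case cannot occur, since then $u$ is isolated in $G'$ and must itself belong to $D$.) This case analysis, together with the bookkeeping that removing $v$ strands no other vertex, is the one place where care is needed; everything else is immediate.

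Finally, for the ``in particular'' assertion I would apply the inequality to an edge $uv$ whose endpoint $v$ has minimum degree $\delta(G)$: then $d(v)=\delta(G)$, $d(u)\le\Delta(G)$, and $|N(u)\cap N(v)|\ge0$, which yields $b(G)\le\Delta(G)+\delta(G)-1$.
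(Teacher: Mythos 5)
The paper does not prove this lemma; it is quoted directly from Hartnell and Rall~\cite{HR}, so there is no internal proof to compare against. Your argument is correct and is essentially the standard proof from that reference: deleting all $d(v)$ edges at $v$ together with the $d(u)-1-|N(u)\cap N(v)|$ edges from $u$ to its private neighbors isolates $v$, and the case analysis ($u\in D$ versus a common neighbor in $D$) correctly shows that a minimum dominating set of the reduced graph, with $v$ removed, still dominates $G$, forcing the domination number to rise. The deduction of the ``in particular'' statement from an edge at a minimum-degree vertex is also fine.
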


The following two conjectures are still open. 

\begin{conjecture}[Teschner~\cite{Teschner}]\label{conj1}
For any graph $G$, $b(G)\leq \frac32 \Delta(G)$.
\end{conjecture}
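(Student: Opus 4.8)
The plan is to split on how the minimum degree $\delta(G)$ compares with the maximum degree $\Delta(G)$. The special case of Lemma~\ref{lem:HR} gives $b(G)\le\Delta(G)+\delta(G)-1$ directly, so the target inequality $b(G)\le\frac32\Delta(G)$ holds whenever $\delta(G)\le\frac12\Delta(G)+1$: simply take a vertex of minimum degree and any edge incident to it. This settles every \emph{sparse} graph and reduces Teschner's conjecture to the regime of large minimum degree, $\delta(G)>\frac12\Delta(G)+1$.

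In that regime I would lean on the full strength of Lemma~\ref{lem:HR}. For an edge $uv$,
\[
b(G)\le d(u)+d(v)-1-|N(u)\cap N(v)|=|N(u)\cup N(v)|-1,
\]
so it suffices to exhibit a single edge whose endpoints overlap heavily, namely one with $|N(u)\cap N(v)|\ge d(u)+d(v)-1-\frac32\Delta(G)$. The natural tool is an averaging argument: since $\sum_{uv\in E(G)}|N(u)\cap N(v)|=3t(G)$, where $t(G)$ is the number of triangles, the mean common-neighborhood size of an edge equals $3t(G)/|E(G)|$. First I would use the large minimum degree to force many edges, then invoke a Moon--Moser / Kruskal--Katona lower bound on $t(G)$, and finally argue that on some edge the overlap clears the threshold above. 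This route succeeds comfortably for triangle-rich graphs; for instance it already recovers the conjecture on complete graphs.

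The hard part --- and the reason the conjecture is still open --- is the \emph{triangle-free, high-degree} case, typified by the complete bipartite graphs $K_{n,n}$ and by triangle-free regular graphs. There every edge has $|N(u)\cap N(v)|=0$, so the displayed bound degrades to $b(G)\le2\Delta(G)-1$, which exceeds $\frac32\Delta(G)$ once $\Delta(G)\ge3$; no choice of edge can rescue the argument, and Lemma~\ref{lem:HR} is intrinsically too weak here. To cross this barrier I would abandon the single-edge estimate and instead try to delete a prescribed set of at most $\frac32\Delta(G)$ edges chosen around a vertex whose closed neighborhood is only barely dominated, so as to raise the domination number by a \emph{global} perturbation rather than a local one. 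Quantifying such a deletion uniformly over all triangle-free dense graphs is the genuine obstacle; the Euler-characteristic machinery used elsewhere in this paper does not reach it, and a new idea beyond Lemma~\ref{lem:HR} appears to be required.
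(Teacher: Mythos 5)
This statement is Teschner's conjecture, which the paper explicitly records as \emph{open}; there is no proof of it in the paper to compare against, and your proposal does not supply one either. Your own write-up concedes this: the triangle-free, high-minimum-degree case (e.g.\ $d$-regular triangle-free graphs such as $K_{n,n}$ or the Petersen graph, all of which automatically satisfy $\delta(G)>\tfrac12\Delta(G)+1$ once $d\ge 3$) is left entirely unresolved, and the closing paragraph is a description of an obstacle rather than an argument. A proposal that ends by identifying ``the genuine obstacle'' without overcoming it is not a proof.

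Beyond that, the middle step is also gappier than you suggest. The reduction via $b(G)\le\Delta(G)+\delta(G)-1$ when $\delta(G)\le\tfrac12\Delta(G)+1$ is correct, but the averaging argument does not cleanly dispose of the remaining ``triangle-rich'' graphs. First, $\sum_{uv\in E(G)}|N(u)\cap N(v)|=3t(G)$ gives you an edge with \emph{average} overlap, while the bound of Lemma~\ref{lem:HR} needs $|N(u)\cap N(v)|\ge d(u)+d(v)-1-\tfrac32\Delta(G)$ on that \emph{same} edge, and the edge achieving large overlap may also have large $d(u)+d(v)$; you never decouple these. Second, Moon--Moser/Kruskal--Katona lower bounds on $t(G)$ require edge density relative to $|V(G)|$ (roughly $|E(G)|>|V(G)|^2/4$), which does not follow from $\delta(G)>\tfrac12\Delta(G)+1$: a $4$-regular graph on many vertices is in your ``hard'' regime yet has $O(|V(G)|)$ edges and possibly no triangles at all. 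So even the claimed reduction to the triangle-free case is not established. The honest summary is that your proposal reproves the easy consequence of Lemma~\ref{lem:HR} (the conjecture holds when $\delta(G)\le\tfrac12\Delta(G)+1$) and otherwise restates why the conjecture is hard.
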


\begin{conjecture}[Dunbar-Haynes-Teschner-Volkmann~\cite{DHTV}]
\label{conj2}
For any planar graph $G$, $b(G)\leq \Delta(G)+1$.
\end{conjecture}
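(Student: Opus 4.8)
The plan is to reduce everything to the Hartnell--Rall inequality (Lemma~\ref{lem:HR}) and then to exhibit, in any planar graph, a single edge that is cheap enough. For an edge $uv$ write
\[
B(uv) = d(u) + d(v) - 1 - |N(u)\cap N(v)|,
\]
so that $b(G) \leq \min_{uv\in E(G)} B(uv)$. It therefore suffices to produce one edge $uv$ with $B(uv) \leq \Delta(G) + 1$, equivalently with
\[
d(u) + d(v) - |N(u)\cap N(v)| \leq \Delta(G) + 2.
\]
Since $d(u)\leq\Delta(G)$ always holds, the task is to find an edge whose second endpoint $v$ satisfies $d(v) - |N(u)\cap N(v)| \leq 2$; that is, $v$ has small degree, or else most of its neighbors are shared with $u$, so that $uv$ lies in many triangles.

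The main tool is Euler's formula, and I would argue by contradiction: assume $B(uv) \geq \Delta(G) + 2$ for every edge, equivalently $d(u) + d(v) - |N(u)\cap N(v)| \geq \Delta(G)+3$ for all $uv$, and seek a contradiction with planarity. Fixing a plane embedding, I would run a discharging argument in the style of Carlson--Develin and Gagarin--Zverovich: give each vertex $v$ the initial charge $d(v) - 6$, so that $\sum_v (d(v)-6) = 2|E(G)| - 6|V(G)| \leq -12$ by $|E(G)| \leq 3|V(G)| - 6$. The negative total charge forces low-degree vertices to exist, while the standing assumption says every neighbor of such a vertex has large degree and shares few common neighbors with it. I would then design discharging rules that push charge across each edge $uv$ toward the low-degree endpoint, using the triangle count $|N(u)\cap N(v)|$ — which the assumption forces to be large whenever both endpoints have moderate degree — to bound how much charge each vertex and face can absorb, and balance this against the $-12$ deficit.

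The hard part will be the borderline configurations, which is exactly where the difficulty (and the reason the statement is still only a conjecture) lives. Applied bluntly, the averaging method yields only $b(G) \leq \Delta(G) + 2$: in a triangulation every edge lies in exactly two triangular faces, so a minimum-degree vertex $v$ with $d(v) = 5$ has $|N(u)\cap N(v)| = 2$ on each incident edge $uv$, whence $d(v) - |N(u)\cap N(v)| = 3$ and $B(uv) = d(u) + 2 \leq \Delta(G) + 2$ — one unit short of the target $\Delta(G)+1$. Extracting that final unit means showing this tight local picture cannot persist at \emph{every} edge: I would hunt for a vertex of degree at most $5$ one of whose incident edges lies in a third triangle, or whose neighborhood forces a second low-degree vertex nearby, so that some edge achieves $|N(u)\cap N(v)| \geq 3$ or $d(v) \leq 4$. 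I expect a purely global count to be unable to guarantee this, so the real work is a structural classification of the extremal second neighborhoods; that such a classification has resisted proof is precisely what keeps Conjecture~\ref{conj2} open, and it is why the methods of this paper instead yield provable explicit bounds only once $\chi\leq 0$ gives room to spare.
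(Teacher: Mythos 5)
There is no proof to compare against: the statement you were asked about is Conjecture~\ref{conj2}, which the paper explicitly records as \emph{open} and never proves. The strongest result actually established for planar graphs is the Kang--Yuan bound $b(G)\leq\min\{\Delta(G)+2,8\}$, and the whole point of the Carlson--Develin/Gagarin--Zverovich machinery that this paper extends is that the curvature/discharging method, as currently understood, stalls one unit short of $\Delta(G)+1$.

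Your proposal is therefore not a proof, and to your credit you say so yourself: you correctly identify that combining Lemma~\ref{lem:HR} with Euler's formula and a discharging scheme gives an edge $uv$ with $d(u)+d(v)-1-|N(u)\cap N(v)|\leq\Delta(G)+2$, and that the obstruction is the tight configuration of a degree-$5$ vertex in a triangulation, where every incident edge lies in exactly two triangles and the count lands exactly at $\Delta(G)+2$. The missing step --- showing that this extremal local picture cannot occur at every edge, or otherwise extracting the final unit --- is precisely the content of the conjecture, and no argument you sketch supplies it. So the gap is not a technical oversight but the entire open problem; what you have written is an accurate account of why the known methods prove the weaker bound $\Delta(G)+2$ (the paper's cited Theorem of Kang and Yuan), not a proof of the statement itself.
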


On the way of attacking Conjecture~\ref{conj2},
Kang and Yuan~\cite{KangYuan} had the following result.

\begin{theorem}[Kang and Yuan~\cite{KangYuan}]
For any planar graph $G$, $b(G)\leq\min\{\Delta(G)+2,8\}$.
\end{theorem}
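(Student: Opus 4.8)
The plan is to derive both bounds from the Hartnell--Rall lemma, so that the whole task reduces to producing, in an arbitrary planar graph $G$, a single edge $uv$ with
\[
d(u)+d(v)-1-|N(u)\cap N(v)|\le\min\{\Delta(G)+2,\,8\}.
\]
Since edge removal and the domination number can be analysed one component at a time, I may assume $G$ is connected with at least two vertices, and I fix a planar embedding. The key translation is that $|N(u)\cap N(v)|$ is exactly the number of triangles through $uv$, and in the embedding every triangular face incident to $uv$ contributes one such triangle; this is what allows Euler's formula to bound the Hartnell--Rall quantity. I will chase the two bounds $\Delta+2$ and $8$ through two (possibly distinct) good edges.

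For $b(G)\le\Delta(G)+2$, Euler's formula forces $\delta(G)\le5$. If $\delta(G)\le3$, a minimum-degree vertex $v$ with any neighbour $u$ already gives $d(u)+d(v)-1\le\Delta+2$, which is the second clause of Lemma~\ref{lem:HR}. The real work is the cases $\delta(G)\in\{4,5\}$, where $\Delta+\delta-1$ overshoots by one or two, so I must find an edge $vu_i$ at a low-degree vertex $v$ carrying at least $\delta-3$ common neighbours. Listing the neighbours $u_1,\dots,u_{d(v)}$ of $v$ in the cyclic order of the embedding, a triangular face $vu_iu_{i+1}$ makes $u_{i+1}$ a common neighbour of $v$ and $u_i$; hence if the faces around $v$ are triangular, the edge $vu_i$ already has the two common neighbours $u_{i-1},u_{i+1}$, settling $\delta=5$ and a fortiori $\delta=4$. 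That a planar graph with $\delta\ge4$ must contain triangles at all follows from the face count $m\le2n-4$ for triangle-free planar graphs, but the difficulty is that such a triangle need not sit at a minimum-degree vertex. I would close this gap by discharging: assign each vertex the charge $d(v)-6$ and each face the charge $2\deg(f)-6$, so that Euler's formula makes the total charge $-12$, and then move charge away from low-degree vertices surrounded by too few triangular faces, so that such a vertex cannot survive without forcing a cheaper edge elsewhere.

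For the absolute bound $b(G)\le8$ the maximum degree may be arbitrarily large, so I cannot use an edge one of whose endpoints has degree $\Delta$; I must instead locate a genuinely light edge $uv$, one with $d(u)+d(v)$ small, in the spirit of Kotzig-type light-edge theorems. The target is an edge with $d(u)+d(v)-|N(u)\cap N(v)|\le9$, which by Lemma~\ref{lem:HR} yields $b(G)\le8$. This is where the main obstacle and the true content of the theorem lie: controlling $d(u)+d(v)$ and $|N(u)\cap N(v)|$ at the same time, since a light edge may lie in no triangle while a triangle-rich edge may be heavy. I expect to run a single discharging scheme of Carlson--Develin/Gagarin--Zverovich type on the charges above, with rules tuned so that any candidate bad edge --- heavy, yet incident to too few triangular faces to compensate --- is forced to absorb charge it cannot receive. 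Verifying these discharging inequalities across all local degree-and-face configurations around a vertex is the laborious step, and arranging the rules to produce exactly the thresholds $\Delta+2$ and $9$ is where the argument must be most delicate.
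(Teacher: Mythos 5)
The paper does not actually prove this theorem; it cites Kang--Yuan and points to Carlson--Develin's simpler proof, and its own Lemma~\ref{lem:main} is the $\chi\leq 0$ analogue of that argument. So your attempt must stand on its own, and as written it does not: it is a strategy outline whose decisive steps are announced but never executed. For the $\Delta(G)+2$ bound you correctly reduce to $\delta\in\{4,5\}$ and to finding an edge $uv$ with $\min\{d(u),d(v)\}\leq 3+|N(u)\cap N(v)|$, you correctly observe that the needed triangles need not sit where you want them, and then you write ``I would close this gap by discharging'' without giving the discharging rules or checking a single configuration. For the bound $8$ the situation is worse: you state the target (an edge with $d(u)+d(v)-|N(u)\cap N(v)|\leq 9$), correctly identify it as ``where the main obstacle and the true content of the theorem lie,'' and then defer the entire verification to an unspecified scheme whose rules are ``tuned'' to make it work. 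A discharging proof \emph{is} its rules together with the case check that no configuration retains positive (or negative) total charge; omitting both leaves nothing to verify. In particular, nothing in your text rules out the a priori possible planar graph in which every edge is simultaneously heavy and triangle-poor --- excluding exactly that is the theorem.

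The efficient way to close the gap is the contradiction form used throughout this paper. Suppose $b(G)\geq\Delta(G)+3$ (resp.\ $b(G)\geq 9$). Lemma~\ref{lem:HR} then forces, for \emph{every} edge $uv$ with $d(u)\leq d(v)$, the inequality $d(u)\geq 4+|N(u)\cap N(v)|$ (resp.\ $d(u)+d(v)\geq 10+|N(u)\cap N(v)|$); in particular $|N(u)\cap N(v)|=0$ when $d(u)=4$, which bounds the sizes $m,m'$ of the faces at $uv$ from below. Feeding these degree and face-size bounds into the edge curvature
\[
w(uv)=\frac{1}{d(u)}+\frac{1}{d(v)}-1+\frac{1}{m}+\frac{1}{m'}-\frac{2}{|E(G)|}
\]
and checking the small number of cases for $d(u)$ shows $w(uv)<0$ for every edge, contradicting $\sum_{uv}w(uv)=0$ from Euler's formula. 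This is precisely the specialization of Lemma~\ref{lem:main} to $\chi=2$, and it is the finite computation your proposal postpones. Until those inequalities are written down and verified, the proof is incomplete.
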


A simpler proof for the above theorem was later given by 
Carlson and Develin~\cite{CarlsonDevelin}, whose ideas
were further extended by Gagarin and Zverovich~\cite{GagarinZverovich} 
to establish a nice upper bound for arbitrary graphs, a step 
forward towards Conjecture~\ref{conj1}. To state this result
we first recall some basic facts about graphs on surfaces below;
the readers are referred to Mohar and Thomassen~\cite{MoharThomassen}
for more details.

Throughout this paper a \emph{surface} means a connected compact 
Hausdorff topological space which is locally homeomorphic to 
an open disc in $\mathbb R^2$. The classification theorem 
for surfaces~\cite[Theorem 3.1.3]{MoharThomassen} states that, 
any surface $S$ is homeomorphic to either $S_h$ ($h\geq0$) which 
is obtained from a sphere by adding $h$ handles, or $N_k$ 
($k\geq1$) which is obtained from a sphere by adding $k$ crosscaps. 
In the former case $S$ is an \emph{orientable surface of genus $h$}, 
and in the latter case $S$ is a \emph{non-orientable surface of genus $k$}.
For example, the torus, the projective plane, and the Klein 
bottle are homeomorphic to $S_1$, $N_1$, and $N_2$, respectively.
The \emph{Euler characteristic} of $S$ is defined as
\[
\chi(S)=\begin{cases}
         2-2h, & S\cong S_h,\\
         2-k, & S\cong N_k.
        \end{cases}
\]

Any graph $G$ can be embedded on some surface $S$, i.e. it can be 
drawn on $S$ with no crossing edges; in addition, the surface $S$ 
can be taken to be either orientable or non-orientable. Denote by 
$\chi(G)$ the largest integer $\chi$ for which $G$ admits an 
embedding on a surface $S$ with $\chi(S)=\chi$. For example, $G$ 
is planar if and only if $\chi(G)=2$.

\begin{theorem}[Gagarin and Zverovich~\cite{GagarinZverovich}]
\label{thm:GZ}
Let $G$ be a graph embeddable on an orientable surface of 
genus $h$ and a non-orientable surface of genus $k$. Then
$b(G)\leq\min\{\Delta(G)+h+2,\Delta(G)+k+1\}$.
\end{theorem}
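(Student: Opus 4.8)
The plan is to derive both bounds $b(G)\le\Delta(G)+h+2$ and $b(G)\le\Delta(G)+k+1$ separately and take their minimum, combining the Hartnell--Rall inequality (Lemma~\ref{lem:HR}) with Euler's formula. It is convenient to rewrite Lemma~\ref{lem:HR} by inclusion--exclusion: for every edge $uv$,
\[
b(G)\le d(u)+d(v)-1-|N(u)\cap N(v)|=|N(u)\cup N(v)|-1,
\]
so it suffices to exhibit a single edge whose two endpoints have a small combined neighbourhood. I would first reduce to the case that $G$ is connected and cellularly embedded: the bondage number of a disconnected graph is the minimum over its components, each component embeds on the same surface, and $\Delta$ of a component does not exceed $\Delta(G)$; moreover a minimum-genus embedding is cellular, which only sharpens the bound.

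Next I would record the local consequence of the bound failing. Suppose, aiming at the orientable statement, that $b(G)\ge\Delta(G)+h+3$. Applying the displayed inequality to an arbitrary edge $uv$ and using $d(v)\le\Delta(G)$ gives
\[
|N(u)\setminus N(v)|=d(u)-|N(u)\cap N(v)|\ge h+4
\]
for each ordered endpoint of each edge. In particular $\delta(G)\ge h+4$, and, crucially, the number of common neighbours of $u$ and $v$ is at most $d(u)-(h+4)$; thus every edge lies on few triangles, and edges incident to a low-degree vertex lie on essentially none. This scarcity of triangles is the structural feature to be contradicted.

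I would then bring in Euler's formula in charge form. Assigning each vertex $v$ the charge $d(v)-6$ and each face $f$ the charge $2\deg(f)-6$, where $\deg(f)$ counts the edges bounding $f$, the identity $|V|-|E|+|F|=\chi$ makes the total charge equal to $-6\chi$. Because $G$ is simple, every face has degree at least $3$, so all face charges are nonnegative and the negative charge is confined to vertices of degree $3,4,5$. The heart of the proof is a discharging argument, in the spirit of Carlson--Develin and Gagarin--Zverovich: the scarcity of triangular faces forced by the standing hypothesis makes faces of large degree (hence surplus positive charge) abundant, and this surplus is transported to the low-degree vertices; one shows the resulting charge distribution is incompatible with the Euler value $-6\chi$ precisely when the additive constant is $h+2$. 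The non-orientable bound runs identically with $\chi=2-k$, the weaker threshold $|N(u)\setminus N(v)|\ge k+3$, and total charge $-6\chi=6k-12$.

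The main obstacle is the discharging bookkeeping that converts the triangle-scarcity hypothesis into a numerical contradiction with $-6\chi$ and pins the additive constant to exactly $h+2$ (respectively $k+1$). One must choose the discharging rules and then verify, corner by corner, that the calibrated threshold $h+4$ is the smallest value of $|N(u)\setminus N(v)|$ that forces enough large faces; the delicate interaction is between triangular faces, which create common neighbours and so shrink $|N(u)\cup N(v)|$, and the low-degree vertices that carry the deficit. It is exactly this face count that the present paper refines, replacing the crude estimate $\deg(f)\ge 3$ by a sharper accounting to obtain the improved cubic bound.
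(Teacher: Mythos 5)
There is a genuine gap, and it sits exactly at the point you defer to ``discharging bookkeeping.'' With vertex charge $d(v)-6$ and face charge $2\deg(f)-6$ the total charge is $-6\chi=12h-12$, which is \emph{nonnegative} for every $h\geq 1$. Meanwhile your standing hypothesis $b(G)\geq\Delta(G)+h+3$ forces $\delta(G)\geq h+4$, so for $h\geq 2$ there are no vertices of degree $3,4,5$ at all: every vertex charge and every face charge is already nonnegative, which is perfectly compatible with a nonnegative total. The classical discharging contradiction (``negative total charge cannot be covered by locally nonnegative charges'') is only available when $\chi>0$; for the surfaces this theorem is actually about, the framework you set up produces no contradiction whatsoever, and no choice of discharging rules can rescue it, because there is nothing to contradict. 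The ingredient you are missing --- and which both Gagarin--Zverovich and the present paper rely on --- is a quantitative lower bound on $|E(G)|$ coming from $\delta(G)\geq z+1$ (e.g.\ $|E(G)|\geq(z+1)(z+2)/2$), which controls the per-edge share of the Euler characteristic. Concretely, the paper's Lemma~\ref{lem:main} assigns each edge the curvature $w(uv)=\frac{1}{d(u)}+\frac{1}{d(v)}-1+\frac{1}{m}+\frac{1}{m'}-\frac{\chi}{|E(G)|}$, whose sum is exactly $0$ by Euler's formula; the point is that the nonnegative term $-\chi/|E(G)|$ is small precisely because $|E(G)|$ grows quadratically in $\delta(G)$, and a case analysis on $d(u)$ (together with your correct observation that few common neighbours force $m,m'\geq 4$) shows each $w(uv)<0$, contradicting the zero sum. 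Your sketch never bounds $|E(G)|$ or otherwise spreads $-6\chi$ over the graph, so the ``incompatibility with $-6\chi$'' you invoke does not materialize.

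For the record, the paper does not reprove Theorem~\ref{thm:GZ} from scratch: it is quoted from \cite{GagarinZverovich}, and Section~3 notes that for $\chi\leq 0$ it follows from Lemma~\ref{lem:main} by verifying that $z=h+3=4-\chi/2$ and $z=k+2=4-\chi$ satisfy the inequalities (\ref{ineq1})--(\ref{ineq3}); the remaining cases $\chi\in\{1,2\}$ are the planar and projective-plane cases handled separately. If you want to complete your argument along your own lines, replace the vertex/face charging by the edge curvature above (or, equivalently, add a rule that distributes the quantity $-6\chi$ uniformly over the $|E(G)|$ edges and then invoke the lower bound on $|E(G)|$); without that step the proof cannot be closed.
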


According to Theorem~\ref{thm:GZ}, if $G$ is planar ($h=0$, $\chi=2$) or 
can be embedded on the real projective plane ($k=1$, $\chi=1$), then $b(G)\leq\Delta(G)+2$. For larger values of $h$ and $k$, it was mentioned 
in \cite{GagarinZverovich} that improvements of Theorem~\ref{thm:GZ} 
can be achieved by adjusting its proof - for example, with the same 
assumptions as above, 
\begin{equation}\label{eq:improv}
b(G)\leq\Delta(G)+
\begin{cases}
h+1, & {\rm if}\ h\geq8,\\
h, & {\rm if}\ h\geq11,\\
k, & {\rm if}\ k\geq3,\\
k-1, & {\rm if}\ k\geq6.
\end{cases}
\end{equation}
The goal of this paper is to establish the following explicit improvement of Theorem~\ref{thm:GZ}.

\begin{theorem}\label{thm:H1}
Let $G$ be a graph embedded on a surface whose Euler
characteristic $\chi$ is as large as possible.
If $\chi\leq 0$ then $b(G)\leq\Delta(G)+\lfloor r\rfloor$,
where $r$ is the largest real root of the following 
cubic equation in $z$:
\[
z^3+2z^2+(6\chi-7)z+18\chi-24 = 0.
\]
In addition, if $\chi$ decreases then $r$ increases.
\end{theorem}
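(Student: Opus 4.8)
The plan is to combine the Hartnell--Rall bound (Lemma~\ref{lem:HR}) with Euler's formula applied at two scales, and then to extract the numerical bound by a short calculus optimization. By Lemma~\ref{lem:HR} it suffices to exhibit a single edge $uv$ with $d(u)+d(v)-1-|N(u)\cap N(v)|\le\Delta(G)+\lfloor r\rfloor$. I would argue by contradiction: assume $b(G)\ge\Delta(G)+\lfloor r\rfloor+1$, so that \emph{every} edge $uv$ satisfies $|N(u)\cap N(v)|\le d(u)+d(v)-\Delta(G)-\lfloor r\rfloor-2$. Bounding one endpoint degree by $\Delta(G)$ turns this into the symmetric inequality $|N(u)\cap N(v)|\le\min\{d(u),d(v)\}-\lfloor r\rfloor-2$ for every edge; in particular the minimum degree satisfies $\delta(G)\ge\lfloor r\rfloor+2$, and every edge lies in comparatively few triangles. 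The whole difficulty is to show that these two features are incompatible on a surface of Euler characteristic $\chi\le0$ once $\lfloor r\rfloor$ exceeds the claimed value.

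For the topological input I would fix an embedding realizing $\chi$ and use Euler's formula $n-m+f=\chi$ together with the face-length bound $2m\ge3f$. This gives the familiar sparsity estimate $m\le3(n-\chi)$, hence $\delta(G)\le6-6\chi/n$, but more importantly a lower bound on the number of triangular faces, namely at least $2m-4n+4\chi$, and therefore on the number of triangles of $G$. The point is quantitative: a large minimum degree forces $m$ close to $3(n-\chi)$, which forces the embedding to be nearly a triangulation, which in turn forces the neighborhoods to be locally dense. Applying the same Euler estimate to the subgraph $G[N[v]]$ induced on a closed neighborhood yields the local bound $|E(G[N(v)])|\le2d(v)+3-3\chi$ on the number of edges inside $N(v)$.

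The crux is to convert ``few triangles per edge'' (from the contradiction hypothesis) and ``many triangles overall'' (from Euler) into a contradiction, which I would do by a discharging argument at a vertex $v$ of minimum degree. Each triangular face incident to $v$ contributes an edge of the neighborhood graph $H=G[N(v)]$, so a deficit of triangular faces at $v$ must be paid for out of the global face-length budget $3n-m-3\chi$; when $\delta(G)$ is large this budget is too small to let $v$ be locally sparse, and one forces a neighbor $u$ with $|N(u)\cap N(v)|$ large---large enough to violate the symmetric inequality above. I expect this discharging step, and pinning down the exact constants it produces, to be the main obstacle: the bookkeeping has to be sharp enough that the resulting constraint is tight, not merely an order-of-magnitude estimate.

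The sharp constraint should take the form $\lfloor r\rfloor+1\le\psi(\delta,\chi)$ for every admissible integer $\delta$, where $\psi$ is an explicit function arising from the two Euler bounds (of the shape $\delta-1$ minus the forced common-neighbor count). I would then relax $\delta$ to a positive real variable and maximize $\psi(\cdot,\chi)$; this is the promised elementary-calculus ingredient. Clearing denominators in the stationarity condition $\partial_\delta\psi=0$ should reduce exactly to the cubic $z^3+2z^2+(6\chi-7)z+18\chi-24=0$, whose largest real root is $r$, so that the contradiction hypothesis is untenable and $b(G)\le\Delta(G)+\lfloor r\rfloor$. Finally, the monotonicity claim is the easy part: writing $P(z,\chi)$ for the cubic and differentiating implicitly gives $dr/d\chi=-(6r+18)/(3r^2+4r+6\chi-7)$, whose numerator is positive and whose denominator is $\partial_z P(r,\chi)>0$ because $r$ is the largest real root of a monic cubic; hence $dr/d\chi<0$, i.e.\ $r$ increases as $\chi$ decreases.
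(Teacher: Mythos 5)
Your proposal correctly extracts the first consequences of the contradiction hypothesis via Lemma~\ref{lem:HR} (namely $\delta(G)\geq\lfloor r\rfloor+2$ and $|N(u)\cap N(v)|\leq\min\{d(u),d(v)\}-\lfloor r\rfloor-2$ for every edge), and your implicit-differentiation argument for the monotonicity of $r$ is essentially sound (one should add that $r$ is a simple root, which follows since $C(z)$ is convex on $(0,\infty)$ with $C(0)=18\chi-24<0$, so $C'(r)>0$ strictly). But the core of the argument is missing: the entire contradiction is supposed to come from the discharging step at a minimum-degree vertex, and you explicitly defer it (``I expect this discharging step\dots to be the main obstacle'') without supplying the bookkeeping or verifying that it closes. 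Moreover, your guess that the cubic $z^3+2z^2+(6\chi-7)z+18\chi-24$ arises by clearing denominators in a stationarity condition $\partial_\delta\psi=0$ is unsubstantiated and does not match how the cubic actually arises: it is not a critical-point equation but the numerator of a worst-case curvature bound. There is no evidence that your proposed optimization over $\delta$ would reproduce this exact polynomial rather than some other constraint.

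For comparison, the paper's proof does not localize at a minimum-degree vertex at all. It assigns to every edge $uv$ the curvature $w(uv)=\frac{1}{d(u)}+\frac{1}{d(v)}-1+\frac1m+\frac1{m'}-\frac{\chi}{|E(G)|}$, whose sum over all edges equals $|V(G)|-|E(G)|+|F(G)|-\chi=0$ by Euler's formula. Under the hypothesis $b(G)\geq\Delta(G)+z$, a three-case analysis on $d(u)$ (equal to $\lceil z\rceil+1$, to $\lceil z\rceil+2$, or at least $\lceil z\rceil+3$) --- using the common-neighbor bound to force $m,m'\geq 4$ or $3$ and a degree-sum bound to control $|E(G)|$ --- shows $w(uv)<0$ for \emph{every} edge provided three explicit polynomial inequalities in $z$ hold, contradicting the zero sum. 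A separate calculus lemma (intermediate value theorem plus mean value theorem) then shows that all three inequalities are equivalent to $z>r$, where $r$ is the largest root of the stated cubic, which is precisely the numerator from the third case. To complete your own route you would need to actually produce the sharp discharging inequality and verify that it yields this cubic; as written, the proposal identifies plausible ingredients but does not constitute a proof.
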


Our proof for Theorem~\ref{thm:H1} is based on the technique developed by 
Carlson-Develin and Gagarin-Zverovich, and includes some elementary 
calculus (mainly the intermediate value theorem and the mean value 
theorem) as a new ingredient.

We will show that $r$ is the unique positive root of the above cubic
equation when $\chi\leq0$. The explicit formula for $r$ is complicated 
and will be given in Section 3. However, we have a simpler result which 
turns out to be asymptotically equivalent to Theorem~\ref{thm:H1}.

\begin{theorem}\label{thm:H2}
Let $G$ be a graph embedded on a surface whose Euler
characteristic $\chi$ is as large as possible. If $\chi\leq0$
then $b(G)<\Delta(G)+\sqrt{12-6\chi}+1/2$, or equivalently,
$b(G)\leq\Delta(G)+\lceil\sqrt{12-6\chi}-1/2\rceil$.
\end{theorem}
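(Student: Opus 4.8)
The plan is to deduce Theorem~\ref{thm:H2} from Theorem~\ref{thm:H1} rather than to repeat the embedding argument. By Theorem~\ref{thm:H1} we have $b(G)\le\Delta(G)+\lfloor r\rfloor$, where $r$ is the largest real root of
\[
p(z)=z^3+2z^2+(6\chi-7)z+18\chi-24,
\]
and (as will be shown in Section~3) $r$ is in fact the unique positive root of $p$ when $\chi\le0$. Since $b(G)-\Delta(G)$ is an integer, the two displayed forms of the bound are equivalent: for an integer $m$ one has $m<x$ iff $m\le\lceil x\rceil-1$, and $\lceil x\rceil-1=\lceil x-1\rceil$, so $b(G)<\Delta(G)+\sqrt{12-6\chi}+1/2$ holds iff $b(G)\le\Delta(G)+\lceil\sqrt{12-6\chi}-1/2\rceil$. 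It therefore suffices to prove the strict inequality, and for that it is enough to show $\lfloor r\rfloor\le r<\sqrt{12-6\chi}+1/2$.

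Set $s=\sqrt{12-6\chi}$ and $t=s+\frac12$; note $s\ge\sqrt{12}>0$ because $\chi\le0$. First I would rewrite the coefficients of $p$ in terms of $s$ using $s^2=12-6\chi$, namely $6\chi-7=5-s^2$ and $18\chi-24=12-3s^2$. Then I would substitute $z=t$ and expand. The key point is that the $s^3$ and $s^2$ terms cancel, leaving a linear expression
\[
p(t)=\frac{31}{4}\,s+\frac{121}{8},
\]
which is strictly positive for all $\chi\le0$. This cancellation is precisely why the square-root is the right asymptotic shape, and checking it is the one genuine computation in the argument.

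To finish, I would use the sign pattern of $p$. Since $p(0)=18\chi-24\le-24<0$ and $r$ is the unique positive root of the cubic $p$ (whose leading coefficient is positive), $p$ is negative on $[0,r)$ and positive on $(r,\infty)$. As $t>0$ and $p(t)>0$, this forces $t>r$, hence $\lfloor r\rfloor\le r<t=\sqrt{12-6\chi}+\frac12$; combining with $b(G)\le\Delta(G)+\lfloor r\rfloor$ gives the theorem. The main thing to be careful about is this last implication: $p(t)>0$ by itself does not place $t$ beyond the largest root of a general cubic, so the argument genuinely relies on the uniqueness of the positive root together with $p(0)<0$. An alternative, self-contained route would be to re-run the Carlson--Develin--Gagarin--Zverovich argument directly with the target bound $\sqrt{12-6\chi}+1/2$, but deducing it from Theorem~\ref{thm:H1} as above is shorter and makes the asymptotic equivalence transparent.
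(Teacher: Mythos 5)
Your proof is correct, and it takes a genuinely different (though closely related) route from the paper. The paper's own proof of Theorem~\ref{thm:H2} does not go through Theorem~\ref{thm:H1} at all: it sets $z=\sqrt{12-6\chi}+1/2$, verifies all three inequalities of Lemma~\ref{lem:main} at that point (computing $A(z)=\frac{23}{4}-2\chi>0$ and $B(z)>0$ in addition to your $C(z)=\frac{31}{4}\sqrt{12-6\chi}+\frac{121}{8}>0$), and applies Lemma~\ref{lem:main} directly, after the same reduction to connected graphs. You instead verify only the single identity $C(t)=\frac{31}{4}\sqrt{12-6\chi}+\frac{121}{8}$ at $t=\sqrt{12-6\chi}+1/2$ and then argue $t>r$ from the sign pattern of $C$, which requires knowing that $r$ is the \emph{unique} positive root of $C$ -- a fact you cite rather than prove, though it is a one-liner from $C(0)=18\chi-24<0$ and $C''(z)=6z+4>0$ on $z\geq0$ (or Descartes' rule of signs), and the paper does establish it in its remarks section. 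Your route buys brevity (one polynomial evaluation instead of three) and makes explicit the logical dependence on Theorem~\ref{thm:H1}; indeed the paper itself observes in Section~3 that Theorem~\ref{thm:H1} implies Theorem~\ref{thm:H2} via $r<\sqrt{12-6\chi}+1/2$, but there it obtains that inequality from Lemma~\ref{lem:equiv}, which again presupposes checking all three inequalities. The paper's direct route is self-contained given Lemma~\ref{lem:main} and avoids any appeal to the structure of the roots of $C$. Your handling of the integrality step ($m<x$ iff $m\leq\lceil x\rceil-1=\lceil x-1\rceil$ for integers $m$) and your caution that $C(t)>0$ alone does not place $t$ beyond the largest root of a general cubic are both correct; just make sure the uniqueness of the positive root is actually proved before (or within) your argument rather than deferred.
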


We will prove Theorem~\ref{thm:H1} and Theorem~\ref{thm:H2}
in Section 2. Then some remarks will be given in Section 3,
including the explicit formula for $r$, a comparison 
of Theorem~\ref{thm:GZ} (for $\chi\leq0$), Theorem~\ref{thm:H1},
and Theorem~\ref{thm:H2}, and a further improvement of
Theorem~\ref{thm:H1} for graphs with large girth.

\section{Proofs for the main results}

Let $G$ be a connected graph which admits an embedding on 
a surface $S$ whose Euler characteristic $\chi$ is as large 
as possible. By Mohar and Thomassen~\cite[\S3.4]{MoharThomassen},
this embedding of $G$ on $S$ can be taken to be a 
\emph{2-cell embedding}, namely an embedding with all faces 
homeomorphic to an open disk. 

\begin{Euler}(c.f. \cite{MoharThomassen})
Suppose that a graph $G$ with vertex set $V(G)$ and edge set $E(G)$
admits a $2$-cell embedding on a surface $S$, and let $F(G)$ be the set
of faces in this embedding. Then
\[
|V(G)|-|E(G)|+|F(G)|=\chi(S).
\]
\end{Euler}

Every edge $uv$ in the 2-cell embedding of $G$ on $S$ appears on 
the boundary of either two distinct faces $F\ne F'$ or a unique 
face $F=F'$; in the former case $uv$ occurs exactly once on the 
boundary of each of the two faces $F$ and $F'$, while in the latter 
case $uv$ occurs exactly twice on the boundary of the face $F=F'$. 
Let $m$ and $m'$ be the number of edges on the boundary of 
$F$ and $F'$, whether or not $F$ and $F'$ are distinct. 
For instance, a path $P_n$ with $n$ vertices is embedded on a 
sphere with only one face, and for any edge in $P_n$ we have 
$m=m'=2(n-1)$. We may assume that $m$ and $m'$ are at least $3$, 
since $m\leq 2$ or $m'\leq 2$ implies $G=P_2$ which is trivial.
Following Gagarin and Zverovich~\cite{GagarinZverovich},
we define the \emph{curvature} of $uv$ to be
\[
w(uv)=\frac{1}{d(u)}+\frac{1}{d(v)}-1+\frac{1}{m}
+\frac{1}{m'}-\frac{\chi}{|E(G)|}.
\]
It follows from Euler's formula that
\begin{equation}\label{eq:weight}
\sum_{uv\in E(G)} w(uv)=|V(G)|-|E(G)|+|F(G)|-\chi=0.
\end{equation}

\begin{lemma}\label{lem:main}
Let $G$ be a connected graph embedded on a surface whose 
Euler characteristic $\chi$ is as large as possible. 
Then $b(G)<\Delta(G)+z$, if $\chi\leq 0$ and 
$z\geq0$ satisfy
\begin{equation}\label{ineq1}
z^2-z+4\chi-6 > 0,
\end{equation}
\begin{equation}\label{ineq2}
5z^3+6z^2+(24\chi-31)z+48\chi-70 > 0,
\end{equation}
\begin{equation}\label{ineq3}
z^3+2z^2+(6\chi-7)z+18\chi-24 > 0.
\end{equation}
\end{lemma}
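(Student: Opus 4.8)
The plan is to bound the bondage number by exhibiting an edge $uv$ for which Hartnell and Rall's bound (Lemma~\ref{lem:HR}) forces $b(G) < \Delta(G) + z$. The strategy, following Carlson–Develin and Gagarin–Zverovich, is to argue by contradiction: suppose $b(G) \geq \Delta(G) + z$. By Lemma~\ref{lem:HR}, every edge $uv$ then satisfies $d(u) + d(v) - 1 - |N(u)\cap N(v)| \geq \Delta(G) + z$, so each edge is, in a suitable sense, ``heavy.'' The first step is to translate this degree inequality into a statement about the curvature $w(uv)$ defined above. Since $d(u) + d(v) \geq \Delta(G) + z + 1 + |N(u)\cap N(v)|$, and since at least one endpoint's degree is bounded by $\Delta(G)$, I would use the contradiction hypothesis to derive a uniform lower bound on each term $\frac{1}{d(u)} + \frac{1}{d(v)} - 1$ appearing in $w(uv)$, or rather to show that this term cannot be too negative relative to what the face terms $\frac{1}{m} + \frac{1}{m'}$ can compensate.

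The key structural step is to show that under the contradiction hypothesis, \emph{every} edge has strictly positive curvature, $w(uv) > 0$, which immediately contradicts the identity $\sum_{uv} w(uv) = 0$ from \eqref{eq:weight}. To do this I would exploit the three hypothesized inequalities \eqref{ineq1}, \eqref{ineq2}, \eqref{ineq3}, each presumably handling a different regime of the local geometry at the edge $uv$ — for instance, according to whether the two incident faces are both triangles ($m=m'=3$), whether exactly one is a triangle, or whether both have length at least $4$. The face lengths $m, m'$ enter $w(uv)$ through $\frac1m + \frac1m'$, which is maximized when $m=m'=3$ (giving $\tfrac23$) and decreases as faces grow; correspondingly, the term $-\frac{\chi}{|E(G)|}$ is nonnegative since $\chi \leq 0$, and I would bound $|E(G)|$ from below using the degree hypothesis to control how large that term can be. The cubic and the auxiliary polynomials in $z$ are engineered so that, in each case distinction on $(m,m')$, the resulting lower bound on $w(uv)$ is positive precisely when the relevant inequality among \eqref{ineq1}--\eqref{ineq3} holds.

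Concretely, I would fix the edge $uv$ and write $d(u) = \Delta(G) - a$ and $d(v) = \Delta(G) - b$ (or parametrize by the smaller degree), then substitute into $w(uv)$ and use $b(G) \geq \Delta(G)+z$ together with Lemma~\ref{lem:HR} to replace $\Delta(G)$-dependent quantities. The goal is to eliminate $\Delta(G)$ and $|E(G)|$ and arrive at an expression in $z$, $\chi$, and the face lengths $m,m'$ alone. After clearing denominators (which is where the degree-$3$ polynomials arise, since $w(uv)$ is a sum of reciprocals of integer quantities), the positivity of $w(uv)$ reduces to the sign of a polynomial in $z$; matching the three worst cases of $(m,m')$ to the three displayed inequalities completes the argument.

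The main obstacle I anticipate is the case analysis on the face lengths together with the bound on $|E(G)|$: the term $-\chi/|E(G)|$ must be controlled uniformly, and since a smaller $|E(G)|$ makes this term larger (hence helps positivity) while the degree hypothesis pushes $|E(G)|$ up, one must check that the worst case for $|E(G)|$ is compatible with each face configuration. Getting the algebra to collapse exactly onto the three given polynomials — rather than onto something messier — will require choosing the case boundaries (triangle vs.\ non-triangle faces) carefully and verifying that no intermediate configuration yields a stronger constraint than \eqref{ineq1}--\eqref{ineq3}. Once every edge is shown to have positive curvature, the contradiction with \eqref{eq:weight} is immediate, and the lemma follows.
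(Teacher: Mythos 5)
Your high-level strategy matches the paper's: assume $b(G)\geq\Delta(G)+z$, use Lemma~\ref{lem:HR} to force all degrees up, bound the curvature of every edge uniformly on one side of zero, and contradict \eqref{eq:weight}. But two things go wrong. First, the sign: under the contradiction hypothesis the degrees are large, so $\frac1{d(u)}+\frac1{d(v)}\leq\frac2{z+1}$ is small, $\frac1m+\frac1{m'}\leq\frac23$, and the term $-\chi/|E(G)|$ is kept small by the lower bound on $|E(G)|$; the provable (and needed) conclusion is $w(uv)<0$ for every edge, not $w(uv)>0$. You cannot show every edge has positive curvature --- that would require small degrees or faces, the opposite of what the hypothesis supplies. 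Either uniform sign would contradict \eqref{eq:weight}, so this is repairable, but it signals that the estimate was not actually carried out.

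Second, and more seriously, you miss the mechanism that makes the three inequalities come out, and without it your plan stalls. The paper's case split is on $d(u)$ (the smaller endpoint degree), not on the face lengths $(m,m')$: from Lemma~\ref{lem:HR} and $d(v)\leq\Delta(G)$ one gets $|N(u)\cap N(v)|\leq d(u)-1-z$, so when $d(u)=\lceil z\rceil+1$ the edge $uv$ lies in no triangle and hence \emph{both} incident faces satisfy $m,m'\geq4$; when $d(u)=\lceil z\rceil+2$ at most one incident face is a triangle ($m\geq3$, $m'\geq4$); only for $d(u)\geq\lceil z\rceil+3$ must one allow $m=m'=3$. Simultaneously, each case gives its own lower bound on $|E(G)|$ by summing degrees over $u$ and its neighbors (e.g.\ $|E(G)|\geq(z+1)(z+2)/2$ in the first case). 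These two case-dependent inputs together produce exactly \eqref{ineq1}--\eqref{ineq3}. A case split purely on $(m,m')$, as you propose, fails at the worst configuration: with $d(u)=\lceil z\rceil+1$ and $m=m'=3$ the curvature bound is $\frac2{z+1}+\frac23-1-\frac{2\chi}{(z+1)(z+2)}$, which is positive already for $\chi=0$, $z=3$, so no inequality of the given form could close that case. The link from the common-neighbor count in Lemma~\ref{lem:HR} to the face lengths is the essential idea of the Carlson--Develin/Gagarin--Zverovich technique, and it is absent from your outline.
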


\begin{proof}
Suppose to the contrary that $b(G)\geq \Delta(G)+z$. 
Let $uv$ be an arbitrary edge in $G$. Assume $d(u)\leq d(v)$
and $m\leq m'$, without loss of generality. By Lemma~\ref{lem:HR},
\[
\Delta(G)+z\leq b(G)\leq d(u)+d(v)-1-|N(u)\cap N(v)|
\]
and thus
\[
0\leq|N(u)\cap N(v)|\leq d(u)+d(v)-1-\Delta(G)-z
\leq d(u)-1-z. 
\]
It follows that $d(u)\geq z+1$, and so $\delta(G)\geq z+1$ since 
the edge $uv$ is arbitrary.
We distinguish three cases below for the value of $d(u)$.

If $d(u)=\lceil z\rceil+1$ then $|E(G)|\geq (z+1)(z+2)/2$,
and $|N(u)\cap N(v)|=0$, which implies $m'\geq m\geq4$.
Thus 
\begin{eqnarray*}
w(uv) &\leq & \frac 2{z+1}+\frac14+\frac14 -1
-\frac{2\chi}{(z+1)(z+2)} \\ 
&=& -\frac{z^2-z+4\chi-6}{2(z+1)(z+2)} <0
\end{eqnarray*}
where the last inequality follows from (\ref{ineq1}).

If $d(u)=\lceil z\rceil+2$ then $|E(G)|\geq (2(z+2)+(z+1)^2)/2$,
and $|N(u)\cap N(v)|\leq 1$, which implies $m'\geq 4$, $m\geq 3$. Thus
\begin{eqnarray*}
w(uv) & \leq & \frac2{z+2}+\frac14+\frac13-1-
\frac{2\chi}{2(z+2)+(z+1)^2} \\
&=& -\frac{5z^3+6z^2+(24\chi-31)z+48\chi-70}
{12(z+2)(z^2+4z+5)}<0
\end{eqnarray*}
where the last inequality follows from (\ref{ineq2}).

If $d(u)\geq\lceil z\rceil+3$ then 
$|E(G)|\geq (2(z+3)+(z+1)(z+2))/2$, and $m'\geq m\geq 3$. Thus
\begin{eqnarray*}
w(uv)&\leq& \frac2{z+3}+\frac13+\frac13-1
-\frac{2\chi}{2(z+3)+(z+1)(z+2)}\\
&=&-\frac{z^3+2z^2+(6\chi-7)z+18\chi-24}
{3(z+3)(z^2+5z+8)}<0
\end{eqnarray*}
where the last inequality follows from (\ref{ineq3}).

Therefore $w(uv)<0$ for all edges $uv$ in $G$.
This contradicts Equation (\ref{eq:weight}).
\end{proof}

\begin{lemma}\label{lem:equiv}
Let $z\geq0$ and $\chi\leq0$. Then the inequalities
(\ref{ineq1},\ref{ineq2},\ref{ineq3}) hold if and only if
$z>r$, where $r$ is the largest real root of the
following cubic equation in $z$:
\[
z^3+2z^2+(6\chi-7)z+18\chi-24 = 0.
\]
\end{lemma}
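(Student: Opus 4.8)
The plan is to prove Lemma~\ref{lem:equiv} by reducing the three inequalities to the single cubic inequality~(\ref{ineq3}). Let me denote the three relevant cubics as
\[
f_1(z)=z^2-z+4\chi-6,\quad f_2(z)=5z^3+6z^2+(24\chi-31)z+48\chi-70,\quad f_3(z)=z^3+2z^2+(6\chi-7)z+18\chi-24,
\]
where I am abusing notation slightly by calling the quadratic $f_1$. The claim is that for $z\ge0$ and $\chi\le0$, all three are positive if and only if $z>r$, where $r$ is the largest real root of $f_3$. First I would establish that $f_3$ has a unique positive real root: evaluating $f_3(0)=18\chi-24<0$ (since $\chi\le0$) shows there is a sign change on $(0,\infty)$, and I expect a derivative or Descartes' rule-of-signs argument to rule out multiple positive roots, so that $r>0$ is well defined and $f_3(z)>0\iff z>r$ on $[0,\infty)$.

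The heart of the argument is then to show that once $f_3(z)>0$, the inequalities $f_1(z)>0$ and $f_2(z)>0$ come for free — in other words, $r$ already dominates the thresholds coming from~(\ref{ineq1}) and~(\ref{ineq2}). For the quadratic $f_1$, I would solve for its positive root explicitly via the quadratic formula and compare it to $r$; alternatively, since we already know $z\ge\delta(G)-1$ forces $z$ to be reasonably large, I expect $f_1(r)>0$ to follow from a direct substitution or a monotonicity comparison. For $f_2$ versus $f_3$, the natural move is to examine the linear combination $f_2(z)-5f_3(z)=(6z^2-44z)+(-6\chi+10)$ or a similar elimination of the leading term, turning the comparison of two cubics into a lower-degree inequality that is easier to sign for $z\ge0$, $\chi\le0$. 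The goal throughout is to show that the root of $f_3$ is the largest of the three thresholds, so that $z>r$ is the binding condition.

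The main obstacle I anticipate is the dependence on the parameter $\chi$: the roots of $f_1$, $f_2$, $f_3$ all shift as $\chi$ varies over $(-\infty,0]$, so a single numerical comparison will not suffice and I must argue uniformly in $\chi$. The cleanest route is probably to avoid computing the roots of $f_2$ altogether and instead show directly that $f_3(z)\le 0$ on $[0,r]$ forces nothing contradictory while $f_3(z)>0$ drags $f_1$ and $f_2$ positive. Concretely, I would try to exhibit each of $f_1$ and $f_2$ as a positive combination of $f_3$ and manifestly-nonnegative terms (using $z\ge r$ and $\chi\le0$), perhaps after bounding $z$ from below by a convenient quantity such as $\sqrt{6-4\chi}$ coming from the eventual application. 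If a clean algebraic domination fails, the fallback is to compare the unique positive roots $r_1<r$ of $f_1$ and $r_2$ of $f_2$ against $r$ by checking the signs $f_1(r)\ge0$ and $f_2(r)\ge0$ as polynomials in $\chi$, using the defining relation $r^3=-2r^2-(6\chi-7)r-18\chi+24$ to reduce powers of $r$ and express everything linearly in $\chi$, where the sign is transparent.
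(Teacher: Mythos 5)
Your overall architecture is the same as the paper's: show that the largest root $r$ of $C(z)=z^3+2z^2+(6\chi-7)z+18\chi-24$ dominates the positive roots of $A(z)=z^2-z+4\chi-6$ and $B(z)=5z^3+6z^2+(24\chi-31)z+48\chi-70$, so that (\ref{ineq3}) is the binding constraint. Your treatment of the ``only if'' direction is actually cleaner than the paper's: the sign pattern $+,+,-,-$ of the coefficients of $C$ (valid since $\chi\leq0$) gives exactly one positive root by Descartes' rule, and together with $C(0)=18\chi-24<0$ this yields $C(z)>0\iff z>r$ on $[0,\infty)$, which immediately disposes of the converse. The paper instead runs a mean value theorem argument producing a point $t\in(s,r)$ with $C'(t)<0$ and the identity $A(t)=\tfrac23C'(t)-t^2-\tfrac{11}{3}t-\tfrac43$; your route avoids that entirely.

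The genuine gap is in the comparison of $B$ with $C$. First, your elimination is miscomputed: $B(z)-5C(z)=-4z^2+(4-6\chi)z+(50-42\chi)$, not $(6z^2-44z)+(-6\chi+10)$, and the corrected remainder is a downward-opening quadratic in $z$ whose sign is not determined for $z\geq0$, $\chi\leq0$ (it is negative for large $z$), so this combination does not settle the comparison. Second, your fallback of verifying $B(r)\geq0$ (which, after reducing $r^3$ via $C(r)=0$, is exactly the same expression $-4r^2+(4-6\chi)r+50-42\chi$) is logically insufficient even if you could sign it: for a cubic, positivity at the single point $r$ does not imply positivity on all of $(r,\infty)$ -- $B$ could still dip below zero beyond $r$ -- so you would additionally need either that the largest root of $B$ is at most $r$ (which is the thing to be proved) or that $B'>0$ on $[r,\infty)$; and signing the expression at $z=r$ requires upper and lower bounds on $r$ that you have not established. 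The paper resolves this by working at the largest root $z_3$ of $B$: it first shows $z_3>3$ from $B(3)=120\chi+26<0$, then uses the combination $B(z)-4C(z)=z(z+1)(z-3)+26-24\chi$, which is manifestly positive for $z\geq3$ and $\chi\leq0$, to conclude $C(z_3)<0$ and hence $r>z_3$ by the intermediate value theorem. Your comparison of $A$ with $C$ has the analogous issue in milder form: substituting the explicit positive root $z_1=\tfrac12+\tfrac12\sqrt{25-16\chi}$ of $A$ into $C$ (the paper's direction, giving $C(z_1)=(\chi+1)\sqrt{25-16\chi}+7\chi-5\leq0$) is computable, whereas evaluating $A(r)$ requires bounds on $r$ you do not yet have.
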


\begin{proof}
Fix a $\chi\leq0$ and consider the left hand side of
(\ref{ineq1},\ref{ineq2},\ref{ineq3}) as polynomials in $z$:
\begin{eqnarray*}
A(z) &=& z^2-z+4\chi-6, \\
B(z) &=& 5z^3+6z^2+(24\chi-31)z+48\chi-70, \\
C(z) &=& z^3+2z^2+(6\chi-7)z+18\chi-24.
\end{eqnarray*}
We first show that the largest real root $r$ of $C(z)$ is larger 
than or equal to the real roots of $A(z)$ and $B(z)$,
by using the intermediate value theorem and the limits
\[
\lim_{z\to\infty}A(z)=
\lim_{z\to\infty}B(z)=
\lim_{z\to\infty}C(z)=\infty.
\]

The polynomial $A(z)$ has two roots
\[
z_1=\frac12+\frac12\sqrt{25-16\chi}>0,\quad
z_2=\frac12-\frac12\sqrt{25-16\chi}<0.
\]
Substituting $z_1$ in $C(z)$ gives
\[
C(z_1)= (\chi+1)\sqrt{25-16\chi}+7\chi-5
\]
which is negative if $\chi\leq -1$ and is $0$ if $\chi=0$.
By the intermediate value theorem,
$C(z)$ has a real root larger than or equal to
$z_1$, and so $r\geq z_1>z_2$.

Next consider $B(z)$. If $\chi=0$ then $B(z)$ has a 
unique real root $14/5$ and $C(z)$ has a unique real 
root $3>14/5$. Assume $\chi\leq -1$ below. 
Then $B(3)=120\chi+26<0$. Applying the intermediate 
value theorem to $B(z)$ gives the existence of real root(s) 
of $B(z)$ in $(3,\infty)$; let $z_3$ be the largest one. Then
\begin{eqnarray*}
B(z_3)-4C(z_3) &=& z_3^3-2z_3^2-3z_3+26-24\chi \\
&=& z_3(z_3+1)(z_3-3) + 26-24\chi >0
\end{eqnarray*}
which implies $C(z_3)<0$. Again by the intermediate value
theorem, $C(z)$ has a root larger than $z_3$, and thus
$r>z_3$.

Therefore $r$ is larger than or equal to any real root of
$A(z)$ and $B(z)$. It follows that $A(z)$, $B(z)$, 
and $C(z)$ are all positive for all $z>r$; otherwise the 
intermediate value theorem would imply that $A(z)$, $B(z)$, 
or $C(z)$ has a root larger than $r$, a contradiction.

Conversely, suppose that $A(z)$, $B(z)$, and $C(z)$ are all 
positive at some point $z=s\geq0$. Then $s\ne r$ since $C(r)=0$.
If $s<r$, then there exists a point $t$ in $(s,r)$ such that 
\[
C'(t)=3t^2+4t+6\chi-7<0
\] 
by the mean value theorem. It follows that
\[
A(t)=\frac23 C'(t) - t^2-\frac{11}3 t-\frac43<0.
\]
We have seen that the upward parabola $A(z)$ has two roots 
$z_1>0$ and $z_2<0$. Then $A(s)>0$ and $s\geq0$ imply $s>z_1$,
and $t>s$ implies $A(t)>0$, which contradicts what we found
above. Hence $s>r$.
\end{proof}

\begin{proof}[Proof of Theorem~\ref{thm:H1}]
Let $r(\chi)$ be the largest root of 
$C(z;\chi)=z^3+2z^2+(6\chi-7)z+18\chi-24$ for $\chi\leq0$.
We first show that $r(\chi)$ increases as $\chi$ decreases.
We have seen in the proof of Lemma~\ref{lem:equiv} that
$r(\chi)\geq3$. It follows from
\[
C(z;\chi)-C(z;\chi-1)=6z+18 
\]
that $C(r(\chi);\chi-1)=-6r(\chi)-18<0$. By the 
intermediate value theorem, $C(z,\chi-1)$ has a root 
larger than $r(\chi)$, and thus its largest root 
$r(\chi-1)$ is also larger than $r(\chi)$.

Now we prove the upper bound for $b(G)$. If $G$ has multiple 
components $G_1,\ldots,G_\ell$, then $\chi\leq\chi_i=\chi(G_i)$ for 
all $i$, since an embedding of $G$ on a surface $S$ automatically
includes an embedding of $G_i$ on $S$. It follows from the definition 
that $b(G)=\min\{b(G_1),\ldots,b(G_\ell)\}$. By Theorem~\ref{thm:GZ}, 
we define $r(1)=r(2)=2$ which is always less than $r(\chi)$ for
$\chi\leq0$. If we could establish our upper bound 
for connected graphs, then 
\[
b(G)\leq b(G_i)\leq\Delta(G_i)+\lfloor r(\chi_i)\rfloor
\leq \Delta(G)+\lfloor r(\chi)\rfloor
\]
and we are done. Therefore we assume $G$ is connected below. 

It follows from Lemma~\ref{lem:main} and Lemma~\ref{lem:equiv} that
$b(G)<\Delta(G)+z$ for all $z>r(\chi)$. Writing $z=r(\chi)+\e$ and 
taking the one-sided limit as $\e\to0^+$ gives $b(G)\leq\Delta(G)+r(\chi)$.
The result then follows immediately from $b(G)$ being an integer.
\end{proof}

\begin{proof}[Proof of Theorem~\ref{thm:H2}]
We can assume $G$ is connected for the same reason as 
discussed in the proof of Theorem~\ref{thm:H1}.
Let $z=\sqrt{12-6\chi}+1/2$. Then for $\chi\leq0$ we have
\[
A(z) = \frac{23}4-2\chi >0,
\]
\[
B(z) = (\sqrt{12-6\chi\,})^3+\frac{107}4 \sqrt{12-6\chi\,}+\frac{629}8-21\chi >0,
\]
\[
C(z) = \frac{31}4\sqrt{12-6\chi\,}+\frac{121}8 > 0.
\]
The result follows immediately from Lemma \ref{lem:main}
and $b(G)$ being an integer.
\end{proof}

\section{Remarks}

Using the cubic formula (c.f. M. Artin~\cite{Artin}) one can show
that the largest real root of $C(z)$ is
\begin{multline*}
r=\frac{25-18\chi}{3\left(
253-189\chi+3\sqrt{5376-6876\chi+1269\chi^2+648\chi^3}
\right)^{1/3}}\\
+\frac13\left(253-189\chi+3\sqrt{5376-6876\chi+1269\chi^2+648\chi^3}
\right)^{1/3}-\frac23.
\end{multline*}
Some explanations are needed to make this formula work. Let 
$f=5376-6876\chi+1269\chi^2+648\chi^3$. If $-4\leq\chi\leq0$ then
$f\geq0$ and the formula works within $\mathbb R$, giving the 
unique real root of $C(z)$. If $\chi\leq -5$ then $f<0$ and we 
need to allow complex numbers when applying the formula. We may 
take $\sqrt f$ to be either of the two square roots of $f$.
Then there are three choices for the cubic roots of 
$253-189\chi+3\sqrt f$, giving three distinct 
real roots of $C(z)$, and we take $r$ to be the largest one. 

One can also see that $r$ is the unique positive root of $C(z)$ when
$\chi\leq0$, since $C(0)=18\chi-24<0$ for $\chi\leq0$ and 
$C''(z)=6z+4>0$ for $z>0$.

Next we consider Theorem~\ref{thm:H2}. By Lemma~\ref{lem:equiv},
$r<\sqrt{12-6\chi}+1/2$. Hence Theorem~\ref{thm:H2} is implied by
Theorem~\ref{thm:H1}. We show that these two results are asymptotically 
equivalent, i.e. 
\begin{equation}\label{eq:lim}
\lim_{\chi\to-\infty} \frac{\sqrt{12-6\chi}+1/2}{r}=1.
\end{equation}
In fact, for any $\e\in(0,1)$, substituting $z=(1-\e)\left(\sqrt{12-6\chi}+1/2\right)$ in $C(z)$ gives
\begin{multline*}
\frac{121}{8}-\frac{799\e}{8}+\frac{631\e^2}{8}-\frac{145\e^3}{8} 
+3\e(3\e^2-13\e+16)\chi \\
+\left(\frac{31}{4}-\frac{141\e}{4}+\frac{161\e^2}{4}
-\frac{51\e^3}{4} + 6\e(\e^2-3\e+2)\chi\right)\sqrt{12-6\chi}.
\end{multline*}
Since $3\e^2-13\e+16>0$ and $\e^2-3\e+2>0$, the above expression 
is negative when $\chi$ is small enough. It follows from the 
intermediate value theorem that $r>(1-\e)\left(\sqrt{12-6\chi}+1/2\right)$.
Therefore (\ref{eq:lim}) holds.

As pointed out by Gagarin and Zverovich~\cite{GagarinZverovich}, 
if $\chi=\chi(G)\leq0$ then $\delta(G)\leq \left\lfloor\frac{5+\sqrt{49-24\chi}}2\right\rfloor$ 
(see Sachs~\cite{Sachs}, for example). 
It follows immediately from Lemma~\ref{lem:HR} that
\[
b(G)\leq\Delta(G)+\left\lfloor\frac{3+\sqrt{49-24\chi}}2\right\rfloor.
\]
Our Theorem~\ref{thm:H2} improves this by $1$ or $2$, since 
\[
\sqrt{12-6\chi}+1/2=\frac{1+\sqrt{48-24\chi}}{2}.
\]

Now consider the results given in \cite{GagarinZverovich}. One can 
prove Theorem~\ref{thm:GZ} for $\chi\leq0$ by showing that
$z=h+3=4-\chi/2$ (for even $\chi\leq0$ achieved by embeddings on 
orientable surfaces) and $z=k+2=4-\chi$ (for all $\chi\leq0$ achieved 
by embeddings on non-orientable surfaces) satisfy the inequalities 
(\ref{ineq1},\ref{ineq2},\ref{ineq3}). By Lemma~\ref{lem:equiv},
Theorem~\ref{thm:H1} implies Theorem~\ref{thm:GZ} for $\chi\leq0$. 
Similarly Theorem~\ref{thm:H1} implies (\ref{eq:improv}).

We give a table below to show our upper bound for $\chi=0,-1,\ldots,-21$.
\[
\begin{tabular}{|c|ccccccccccc|}
\hline
$\chi$ & 0 & -1 & -2 & -3 & -4 & -5 & -6 & -7 & -8 & -9 & -10  \\
\hline 
$\lfloor r \rfloor$ & 3 & 3 & 4 & 5 & 5 & 6 & 6 & 7 & 7 & 8 & 8\\
\hline\hline
$\chi$ &-11&-12&-13& -14 & -15 & -16 & -17 & -18 & -19 & -20 & -21\\
\hline 
$\lfloor r\rfloor$ &8 & 9 & 9 & 9 & 10 & 10 & 10 & 11 & 11 & 11 &11\\
\hline
\end{tabular}
\] 


Our result can be further improved when the graph $G$ has
large \emph{girth} $g(G)$, defined as the length of the 
shortest cycle in $G$. If $G$ has no cycle then $g(G)=\infty$,
and $b(G)\leq 2$ by \cite{BHNS}.

\begin{proposition}
Let $G$ be a graph embedded on a surface whose Euler characteristic 
$\chi$ is as large as possible. If $\chi\leq0$ and $g=g(G)<\infty$, 
then $b(G)\leq\Delta(G)+\lfloor s \rfloor$
where $s$ is the larger root of the quadratic polynomial 
$A(z)=(g-2)z^2+(g-6)z+2\chi g-2g-4$, i.e.
\[
s=\frac{\sqrt{8g(2-g)\chi+(3g-2)^2}-(g-6)}{2(g-2)}.
\]
\end{proposition}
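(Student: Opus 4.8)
The plan is to adapt the curvature argument of Lemma~\ref{lem:main}, exploiting that a large girth forces every face of the embedding to be large. First I would reduce to the case where $G$ is connected (the disconnected case being handled as in the proof of Theorem~\ref{thm:H1}), fix a $2$-cell embedding of $G$ realizing $\chi$, and assume for contradiction that $b(G)\geq\Delta(G)+z$ for some $z>s$. Running the Hartnell--Rall bound of Lemma~\ref{lem:HR} on an arbitrary edge $uv$ with $d(u)\le d(v)$ exactly as in Lemma~\ref{lem:main} yields $d(u)\geq z+1$, hence $\delta(G)\geq z+1$. In particular $|V(G)|\geq\delta(G)+1\geq z+2$, so by the handshake identity $2|E(G)|=\sum_w d(w)\geq(z+1)(z+2)$, giving $|E(G)|\geq(z+1)(z+2)/2$. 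This is the same edge count used in the first case of Lemma~\ref{lem:main}.

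The new ingredient is a uniform face estimate that collapses the three cases of Lemma~\ref{lem:main} into one. Since $g(G)=g<\infty$ and $\delta(G)\geq 2$ (verified in the last paragraph), every facial walk has length at least $g$: a facial walk that is a cycle has length at least $g$ by definition of the girth, and a non-simple facial walk contains a cycle and so also has length at least $g$. Thus $m,m'\geq g$ for every edge. Substituting $d(u),d(v)\geq z+1$, $m,m'\geq g$, and $|E(G)|\geq(z+1)(z+2)/2$ into
\[
w(uv)=\frac{1}{d(u)}+\frac{1}{d(v)}-1+\frac{1}{m}+\frac{1}{m'}-\frac{\chi}{|E(G)|},
\]
and using $\chi\leq 0$ to replace $-\chi/|E(G)|$ by $-2\chi/((z+1)(z+2))$, I would obtain
\[
w(uv)\leq\frac{2}{z+1}-1+\frac{2}{g}-\frac{2\chi}{(z+1)(z+2)}=\frac{-A(z)}{g(z+1)(z+2)}.
\]
A direct expansion over the common denominator $g(z+1)(z+2)$ shows the numerator equals $-A(z)$ with $A(z)=(g-2)z^2+(g-6)z+2\chi g-2g-4$, so $A(z)>0$ forces $w(uv)<0$.

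To package this as in Theorem~\ref{thm:H1}, note that $g\geq 3$ makes $A$ an upward parabola, so $A(z)>0$ precisely when $z$ exceeds its larger root $s$; the quadratic formula, together with the simplification of the discriminant to $(3g-2)^2+8g(2-g)\chi$, produces the displayed expression for $s$, and this discriminant is nonnegative because $\chi\le 0$ and $g\geq 3$, so $s$ is real. Hence for every $z>s$ we get $w(uv)<0$ for all edges, contradicting $\sum_{uv\in E(G)}w(uv)=0$ from Equation~(\ref{eq:weight}). Therefore $b(G)<\Delta(G)+z$ for all $z>s$; letting $z\to s^+$ gives $b(G)\leq\Delta(G)+s$, and integrality of $b(G)$ finishes the proof.

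The one genuine obstacle is justifying $m,m'\geq g$, which needs $\delta(G)\geq 2$ so that no face is bounded by a degenerate walk traversing a pendant edge twice. I would secure this by checking $s>1$: since $A(1)=2\chi g-12<0$ for $\chi\leq 0$ and $g\ge 3$, the value $1$ lies strictly between the two roots of $A$, so $s>1$, and the contradiction hypothesis then yields $\delta(G)\geq z+1>2$. The remaining steps---the expansion of the numerator to $-A(z)$ and the discriminant simplification---are routine calculations.
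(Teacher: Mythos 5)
Your argument is correct and follows essentially the same route as the paper: reduce to the connected case, assume $b(G)\geq\Delta(G)+z$, use Lemma~\ref{lem:HR} to get $\delta(G)\geq z+1$ and $|E(G)|\geq(z+1)(z+2)/2$, bound $m,m'\geq g$ via the girth, and derive $w(uv)\leq -A(z)/\bigl(g(z+1)(z+2)\bigr)<0$, contradicting (\ref{eq:weight}). Your only addition is an explicit justification of $m,m'\geq g$ (via $s>1$ and backtrack-free facial walks), a point the paper simply asserts as clear.
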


\begin{proof}
Assume $G$ is connected for the same reason as in the proof of
Theorem~\ref{thm:H1}. It suffices to show that $b(G)<\Delta(G)+z$ for
all $z\geq 0$ with $A(z)>0$; the result follows from writing $z=s+\e$ and taking the one-sided limit as $\e\to0^+$.

Suppose to the contrary that $b(G)\geq \Delta(G)+z$. Then
$\delta(G)\geq z+1$ by Lemma~\ref{lem:HR}. Let $uv$ be an 
arbitrary edge in $G$. It is clear that $m,m'\geq g$, and thus
\begin{eqnarray*}
w(uv) &\leq & \frac 2{z+1}+\frac2g-1-\frac{2\chi}{(z+1)(z+2)} \\ 
&=& -\frac{(g-2)z^2+(g-6)z+2\chi g-2g-4}{g(z+1)(z+2)}<0
\end{eqnarray*}
where the last inequality follows from $A(z)>0$.
This contradicts Equation (\ref{eq:weight}).
\end{proof}

For example, we have 
\[
b(G)\leq\Delta(G)+
\begin{cases}
2, & {\rm if}\ \chi=0,\ g\geq 5,\\
1, & {\rm if}\ \chi=0,\ g\geq 7,\\
2, & {\rm if}\ \chi=-1,\ g\geq 6,\\
3, & {\rm if}\ \chi=-2,\ g\geq 5, \\
2, & {\rm if}\ \chi=-2,\ g\geq 7.
\end{cases}
\]

\section*{Acknowledgement}

The author thanks the anonymous referees for their valuable 
suggestions and comments. He also thanks Rong Luo, Victor Reiner 
and Arthur White for helpful conversations and email correspondence.

\end{document}